\documentclass{article}

\usepackage{PRIMEarxiv}
\usepackage[utf8]{inputenc}
\usepackage[T1]{fontenc}
\usepackage{hyperref}
\usepackage{url}
\usepackage{booktabs}
\usepackage{amsfonts}
\usepackage{nicefrac}
\usepackage{microtype}
\usepackage[T1]{fontenc}
\usepackage[utf8]{inputenc}
\usepackage{lmodern}

\usepackage{amsmath,amssymb,amsthm}
\usepackage{array,booktabs,longtable,multirow}
\usepackage{tikz}
\usepackage{xcolor}
\usepackage{hyperref}
\hypersetup{colorlinks=true,linkcolor=blue!50!black,citecolor=blue!50!black,urlcolor=blue!50!black}
\newtheorem{theorem}{Theorem}[section]
\newtheorem{proposition}[theorem]{Proposition}
\newtheorem{lemma}[theorem]{Lemma}
\newtheorem{corollary}[theorem]{Corollary}
\theoremstyle{remark}
\newtheorem{remark}[theorem]{Remark}

\newtheorem{conjecture}[theorem]{Conjecture}

\title{Critical phase transitions in minimum-energy configurations for the exponential kernel family $e^{-|x-y|^q}$ on the unit interval}
\author{Michael T.M. Emmerich\\Faculty of Information Technology, University of Jyväskylä, Finland\\\texttt{m.t.m.emmerich@jyu.fi}}
\date{March 28, 2026}

\begin{document}
\maketitle

\begin{abstract}
We study the optimal placement of $k$ ordered points on the unit interval for the bounded pair potential
\[
K_q(d)=e^{-d^q}, \qquad q>0.
\]
The family interpolates between strongly cusp-like kernels for $0<q<1$, the threshold kernel $e^{-d}$, and the flatter Gaussian-type regime $q>1$. Our emphasis is on the transition from collision-free minimizers to endpoint-collapsed minimizers. We reformulate the problem in gap variables, record convexity, symmetry, and the Karush-Kuhn-Tucker conditions, and give a short proof that collisions are impossible for $0<q<1$. At the threshold $q=1$ we recover the endpoint-clustering law for $e^{-d}$, while for $q>1$ we identify critical exponents $q_k$ beyond which interior points are no longer optimal. For odd $k$ we derive the exact universal value
\[
q_{2m+1} = \frac{\log(1/(-\log((1+e^{-1})/2)))}{\log 2} \approx 1.396363475,
\]
and for even $k=4,6,\dots,20$ we compute the numerical transition values
\[
\begin{aligned}
&q_4\approx 1.062682507,\quad
q_6\approx 1.155601329,\quad
q_8\approx 1.206132611,\quad
q_{10}\approx 1.238523533,\\
&q_{12}\approx 1.261308114,\quad
q_{14}\approx 1.278305167,\quad
q_{16}\approx 1.291510874,\quad
q_{18}\approx 1.302082885,\\
&q_{20}\approx 1.310744185.
\end{aligned}
\]
We also include comparison tables and diagrams for the kernels $e^{-\sqrt d}$, $e^{-d}$, and $e^{-d^2}$, briefly relate the bounded family to the singular Riesz kernel $d^{-s}$, and identify the $q\to 0^+$ limit with the Fekete/Chebyshev--Lobatto configuration on $[0,1]$.
\end{abstract}

\section{Introduction}

For an ordered $k$-tuple $0\le x_1\le \cdots \le x_k\le 1$, consider the discrete energy
\begin{equation}
E_{k,q}(x_1,\dots,x_k)
:=
\sum_{1\le i<j\le k} e^{-|x_j-x_i|^q},
\qquad q>0.
\label{eq:defenergy}
\end{equation}
The purpose of this note is to document and organize a transition phenomenon in the one-dimensional minimization of \eqref{eq:defenergy}. The parameter $q$ governs the local shape of the kernel near the origin:
\[
e^{-d^q} = 1-d^q+O(d^{2q})\qquad (d\downarrow 0).
\]
Hence $q<1$ corresponds to an infinite local slope, $q=1$ is the borderline case, and $q>1$ produces a kernel that is flat at the origin.

For singular kernels such as the Riesz family $d^{-s}$, collisions are forbidden because the energy diverges at zero; this is the classical setting of discrete energy minimization on rectifiable sets \cite{BorodachovHardinSaff2019,HardinSaff2004,HardinSaff2005}. In contrast, the bounded family \eqref{eq:defenergy} permits collisions in principle. The question is therefore not whether collisions are admissible, but when they become energetically optimal.

The main findings of the present report are the following.
\begin{itemize}
\item For $0<q<1$, minimizers are collision-free. The proof is based on the comparison $\varepsilon^q \gg \varepsilon$ as $\varepsilon\downarrow 0$.
\item At the threshold $q=1$, partial endpoint clustering occurs. Numerically one observes
\[
m(k)=\left\lfloor \frac{k+1}{3}\right\rfloor
\]
coincident points at each endpoint for $k\ge 2$.
\item For $q>1$, one obtains a family of critical exponents $q_k$ such that for $q>q_k$ all minimizers for the tested values $k=3,\dots,20$ are supported only on $\{0,1\}$.
\item As $q\to 0^+$ for fixed $k$, the minimizers converge to the Fekete configuration on $[0,1]$, i.e. to the affine Chebyshev--Lobatto points.
\item For odd $k$, the critical exponent is independent of $k$ and can be derived in closed form. For even $k$, the transition is described by a one-parameter symmetric branch and yields the numerical values listed in Tables~\ref{tab:qk} and \ref{tab:qk-even-extended}.
\end{itemize}

The paper is organized as follows. Section~2 introduces the gap formulation together with convexity, symmetry, and the KKT conditions. Section~3 gives the local argument excluding collisions for $0<q<1$. Section~4 discusses the threshold case $q=1$, while Section~5 derives the critical exponents for $q>1$. Section~6 treats the asymptotic regime $q\to 0^+$, and Section~7 compares the behavior with three representative kernels and with the Riesz family. Section~8 provides numerical studies for gradient flow at different values of $k$ and $q$. Section~9 summarizes the picture and includes a first phase diagram for the $(k,q)$-plane. The appendix then records open questions and collects the illustrative point diagrams.

\section{Gap variables, convexity, symmetry, and KKT conditions}

Let
\[
g_r:=x_{r+1}-x_r,\qquad r=1,\dots,k-1.
\]
Then $g_r\ge 0$ and
\[
x_j-x_i = g_i+\cdots +g_{j-1}.
\]
Writing $g=(g_1,\dots,g_{k-1})$, the energy becomes
\begin{equation}
\mathcal{E}_{k,q}(g)
=
\sum_{1\le i<j\le k}
\exp\!\Bigl(-\bigl(g_i+\cdots +g_{j-1}\bigr)^q\Bigr).
\label{eq:gapenergy}
\end{equation}

\begin{lemma}[Full-span property]
For every $q>0$ and every $k\ge 2$, every minimizer of \eqref{eq:defenergy} satisfies $x_1=0$ and $x_k=1$. Equivalently, in gap variables,
\[
\sum_{r=1}^{k-1} g_r = 1.
\]
\end{lemma}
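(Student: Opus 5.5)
Suppose $x=(x_1,\dots,x_k)$ is a minimizer with total span $L:=x_k-x_1<1$. We construct a configuration with strictly smaller energy; hence every minimizer must have $L=1$, and then $x_1=0$, $x_k=1$.

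Two simple facts drive the argument. First, the kernel $K_q(d)=e^{-d^q}$ is strictly decreasing in $d\ge 0$ for every $q>0$, because $\frac{d}{dd}e^{-d^q}=-q\,d^{q-1}e^{-d^q}<0$ for $d>0$. Second, the constraint set is compact, so minimizers exist. The energy is continuous even where $d^{q-1}$ blows up at $d=0$ for $q<1$, so existence is not affected.

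The competitor is the affine dilation
\[
T(x)=\frac{x-x_1}{L}.
\]
It maps $[x_1,x_k]$ onto $[0,1]$ and preserves the order. Every pairwise distance transforms as $d_{ij}\mapsto d_{ij}/L$. In gap variables this is simply $g\mapsto g/L$, after which $\sum_r g_r=1$.

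Since $1/L>1$, each distance with $d_{ij}>0$ strictly increases, so each such term $e^{-d_{ij}^q}$ strictly decreases. Collided pairs with $d_{ij}=0$ stay at distance $0$ and contribute $1$ both before and after.

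The one case needing care is when some pairs are at positive distance, so that the strict decrease actually occurs. If $L>0$, the pair $(1,k)$ has $d_{1k}=L>0$. Its term therefore strictly decreases, and the total energy strictly drops, contradicting minimality.

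If $L=0$, all points coincide and $E=\binom{k}{2}$. Any spread configuration, for example $x_1=0$ and $x_k=1$ with the other points placed anywhere, has $E\le \binom{k}{2}-1+e^{-1}<\binom{k}{2}$. So $L=0$ is not minimal either.

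Combining the cases, every minimizer has $L=1$. The only way to fit a span of length $1$ inside $[0,1]$ is $x_1=0$ and $x_k=1$. Translated to gap variables this is exactly $\sum_{r=1}^{k-1} g_r=1$.

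No step here is a genuine obstacle. The only places requiring attention are the degenerate $L=0$ case and the observation that collided pairs are invariant under the dilation, so that strictness has to come from the pair $(1,k)$.
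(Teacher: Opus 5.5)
Your proof is correct and rests on the same idea as the paper's: construct a competitor in which no pairwise distance decreases and at least one strictly increases, then use strict monotonicity of $d\mapsto e^{-d^q}$. The only difference is the competitor. The paper slides the extreme point to the endpoint, which strictly lengthens all $k-1$ distances from that point and so covers the coincident case $L=0$ without extra work. Your dilation $x\mapsto (x-x_1)/L$ is undefined at $L=0$, which is why you need the separate case you supply.
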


\begin{proof}
The kernel $d\mapsto e^{-d^q}$ is strictly decreasing on $(0,\infty)$. If $x_1>0$ or $x_k<1$, one can move the leftmost point to $0$ or the rightmost point to $1$ without decreasing any pairwise distance, and with at least one strict increase unless the point is already at the endpoint. Hence the energy strictly decreases.
\end{proof}

\begin{lemma}[Convexity for $q\ge 1$]
For every fixed $k$ and $q\ge 1$, the gap-energy \eqref{eq:gapenergy} is convex on the simplex
\[
\Delta_{k-1}:=\Bigl\{g\in [0,\infty)^{k-1}: \sum_{r=1}^{k-1} g_r=1\Bigr\}.
\]
\end{lemma}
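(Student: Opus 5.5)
The plan is to reduce convexity of $\mathcal{E}_{k,q}$ on $\Delta_{k-1}$ to a one-variable statement about the profile $\varphi_q(d):=e^{-d^q}$. Each summand in \eqref{eq:gapenergy} has the form $\varphi_q(\ell_{ij}(g))$, where $\ell_{ij}(g)=g_i+\cdots+g_{j-1}$ is a linear form. Composition with a linear map preserves convexity, and sums of convex functions are convex. So it suffices to show that $\varphi_q$ is convex on the range of the $\ell_{ij}$, namely $[0,1]$ (the upper bound comes from the Full-span property).

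The key step is therefore to verify convexity of $\varphi_q$ on $[0,1]$. I would do this by differentiating twice:
\[
\varphi_q''(d)=q\,d^{q-2}e^{-d^q}\bigl(q\,d^q-(q-1)\bigr),\qquad d>0.
\]
At $q=1$ this reduces to $e^{-d}>0$, so $\varphi_1$ is convex on all of $[0,\infty)$. In that case each summand $e^{-\ell_{ij}(g)}$ is convex, and the lemma follows immediately.

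For $q>1$ I would first try the composition route. Write $\varphi_q=h\circ u$ with $h(t)=e^{-t}$ and $u(d)=d^q$. Here $h$ is convex and $u$ is convex on $[0,\infty)$. However, $h$ is decreasing, so the standard composition rule does not apply in this direction. Equivalently, the bracket $q\,d^q-(q-1)$ above is negative for $d^q<(q-1)/q$. I expect this to be the main obstacle.

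To get around it, I would not require convexity of each summand separately. Instead I would exploit the sum structure, checking that the quadratic form
\[
v\mapsto \sum_{i<j}\varphi_q''(\ell_{ij}(g))\,\ell_{ij}(v)^2
\]
is nonnegative on the tangent space $\{\sum_r v_r=0\}$. The hope is that the long-range terms compensate for the concavity of $\varphi_q$ near $0$; these include the term with $\ell_{1k}=1$, where $\varphi_q''(1)=q e^{-1}>0$.

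I would test this first in the smallest nontrivial case $k=3$, where the energy restricted to $\Delta_2$ is $\varphi_q(g_1)+\varphi_q(1-g_1)+\varphi_q(1)$. Its second derivative in $g_1$ is $\varphi_q''(g_1)+\varphi_q''(1-g_1)$, and this is where I expect the argument to be decided. For $1<q<2$ the factor $d^{q-2}$ makes $\varphi_q''(g_1)\to-\infty$ as $g_1\downarrow 0$, so this check is likely to fail near the vertices of the simplex.

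If it does fail, the argument as planned proves the lemma exactly for $q=1$. For $q>1$ it proves only the weaker statement that $\mathcal{E}_{k,q}$ is convex on the subregion of $\Delta_{k-1}$ where every partial sum satisfies $\ell_{ij}(g)^q\ge (q-1)/q$. The statement for general $q>1$ would then need to be restricted or corrected accordingly.
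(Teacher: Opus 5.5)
Your diagnosis is correct, and the error is in the paper's proof, not yours. The paper asserts that $t\mapsto e^{-t^q}$ is convex because $u\mapsto e^{-u}$ is convex and decreasing and $t\mapsto t^q$ is convex and increasing. The composition rule does not cover this combination. It gives convexity of $h\circ u$ when $h$ is convex \emph{nondecreasing} and $u$ is convex, or when $h$ is convex nonincreasing and $u$ is \emph{concave}. A convex decreasing $h$ composed with a convex $u$ is exactly the case you flag. Your formula for $\varphi_q''$ shows that $\varphi_q$ is strictly concave on $[0,((q-1)/q)^{1/q}]$ for every $q>1$; for instance $e^{-d^2}$ is concave on $[0,2^{-1/2}]$. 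The lemma is in fact false for every $q>1$ once $k\ge 3$, and for $q>1$ it holds only trivially when $k\le 2$. At $q=1$ your argument coincides with the paper's and is valid, since the inner map is then linear.

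What you should change is to turn ``likely to fail'' into an actual disproof covering all $q>1$; your vertex test alone does not do this.
\begin{itemize}
\item Restrict to the edge $g=(t,1-t,0,\dots,0)$ of $\Delta_{k-1}$, i.e.\ the configuration $(0,t,1,\dots,1)$. There the energy is $\varphi_q(t)+(k-2)\varphi_q(1-t)$ plus a constant, with second derivative $\varphi_q''(t)+(k-2)\varphi_q''(1-t)$.
\item For $1<q<2$, this second derivative tends to $-\infty$ as $t\downarrow 0$, because $\varphi_q''(t)\sim -q(q-1)t^{q-2}$ while $\varphi_q''(1)=qe^{-1}$ is finite. For $k=3$ this is your own computation, and it is conclusive, not merely likely.
\item For $q>2$ the blow-up disappears: $\varphi_q''(0^+)=0$, so for $k=3$ the vertex test actually passes. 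You need the middle of the edge instead. The second derivative at $t=\tfrac12$ equals $(k-1)\varphi_q''(\tfrac12)$, which is negative for all $q\ge 2$ because $q2^{-q}<q-1$ there.
\end{itemize}

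A chord counterexample is already contained in the paper. In gap coordinates, $X_{\mathrm{odd,int}}$ is the midpoint of $X_{\mathrm{odd,end}}$ and its reflection, and these two have equal energy. Convexity would therefore force $E_{\mathrm{odd,int}}\le E_{\mathrm{odd,end}}$. This contradicts the odd-$k$ crossing proposition for every $q>q_{\mathrm{odd}}\approx 1.396$. For example, at $k=3$, $q=2$ one has $2e^{-1/4}+e^{-1}\approx 1.93>1.74\approx 1+2e^{-1}$.

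Two minor points:
\begin{itemize}
\item The bound $\ell_{ij}\le 1$ comes from the definition of $\Delta_{k-1}$, not from the Full-span lemma, which concerns minimizers.
\item Your fallback region $\{\ell_{ij}(g)\ge((q-1)/q)^{1/q}\text{ for all } i<j\}$ does give a true statement: it is a convex polytope on which each summand is convex. However, it constrains every single gap, so it is often empty, e.g.\ for $q=2$ and every $k\ge 3$.
\end{itemize}
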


\begin{proof}
For $q\ge 1$, the map $t\mapsto t^q$ is convex and increasing, and $u\mapsto e^{-u}$ is convex and decreasing on $[0,\infty)$. Thus $t\mapsto e^{-t^q}$ is convex. Each term in \eqref{eq:gapenergy} is the composition of this convex function with a linear form in the gaps, and summing preserves convexity.
\end{proof}

\begin{remark}
For $0<q<1$ the map $t\mapsto e^{-t^q}$ is no longer convex on the whole half-line, but the local argument excluding collisions below uses only the short-distance asymptotics and not global convexity.
\end{remark}

Reflection about the midpoint sends
\[
(x_1,\dots,x_k)\mapsto (1-x_k,\dots,1-x_1)
\]
and preserves the energy. In gap variables this is the reversal
\[
(g_1,\dots,g_{k-1})\mapsto (g_{k-1},\dots,g_1).
\]
Therefore symmetric minimizers exist whenever convexity permits averaging.

\begin{lemma}[KKT condition]
Assume $q\ge 1$ and let $g$ minimize \eqref{eq:gapenergy} on $\Delta_{k-1}$. Then there exists $\lambda\in \mathbb{R}$ such that for every $r=1,\dots,k-1$,
\[
\sum_{i\le r<j} q\,(x_j-x_i)^{q-1} e^{-(x_j-x_i)^q}
=
\lambda
\quad \text{whenever } g_r>0,
\]
and the left-hand side is at most $\lambda$ whenever $g_r=0$.
\end{lemma}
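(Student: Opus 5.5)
The plan is to read the statement as the first-order optimality condition for minimizing a differentiable function over the simplex $\Delta_{k-1}$. The feasible set is cut out by linear constraints, $g_r\ge 0$ and $\sum_r g_r=1$, so linear-constraint qualification holds automatically and the KKT conditions are necessary at every local minimizer. This is true whether or not the objective is convex. Convexity (the previous lemma) makes the conditions sufficient as well, but the necessity direction is all the statement requires. The work therefore reduces to computing $\partial\mathcal{E}_{k,q}/\partial g_r$ and matching signs.

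\textbf{Step 1: the gradient.} For $q\ge 1$ the function $\phi(t)=e^{-t^q}$ is $C^1$ on $[0,\infty)$, with $\phi'(t)=-q\,t^{q-1}e^{-t^q}$. At $t=0$ this equals $-1$ when $q=1$ and $0$ when $q>1$, using the convention $0^{0}=1$. Each term of \eqref{eq:gapenergy} is $\phi(g_i+\cdots+g_{j-1})$, which depends on $g_r$ exactly when $i\le r<j$, and then with coefficient $1$. Hence
\[
\frac{\partial \mathcal{E}_{k,q}}{\partial g_r}
=-\sum_{i\le r<j} q\,(x_j-x_i)^{q-1}e^{-(x_j-x_i)^q}
=:-F_r(g).
\]
This is the one place where the hypothesis $q\ge 1$ enters beyond convexity. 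For $q<1$ the derivative blows up at zero distance, so the objective fails to be differentiable on part of the boundary.

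\textbf{Step 2: the KKT system.} Write the Lagrangian with a multiplier $\mu$ for the equality constraint and multipliers $\nu_r\ge 0$ for $g_r\ge 0$. Stationarity gives $-F_r(g)+\mu-\nu_r=0$, and complementary slackness gives $\nu_r g_r=0$. Setting $\lambda:=\mu$, every $r$ with $g_r>0$ has $\nu_r=0$ and hence $F_r=\lambda$. Every $r$ with $g_r=0$ has $F_r=\lambda-\nu_r\le\lambda$. This is exactly the claimed statement.

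\textbf{Self-contained check of the multipliers.} To avoid quoting general KKT theory, I would verify the conditions directly by a pairwise exchange argument. Take any $r$ with $g_r>0$ and any other index $s$, and move mass $\varepsilon>0$ from $g_r$ to $g_s$; this stays in $\Delta_{k-1}$ for small $\varepsilon$. Minimality forces the one-sided directional derivative $F_r-F_s$ to be nonnegative, that is, $F_s\le F_r$. When $g_s>0$ too, the reverse move is also feasible, so $F_s=F_r$. Setting $\lambda:=\max_r F_r$, which is attained at indices with positive gaps, yields both assertions.

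The only mildly delicate point is one-sided differentiability at boundary points where some gap is zero, especially at $q=1$, where $\phi'(0)=-1\neq 0$. This is handled by the continuity of $\phi'$ on the closed half-line.
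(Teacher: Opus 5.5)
Your proposal is correct and takes the same route as the paper, which simply invokes the standard KKT conditions on the simplex and computes $\partial\mathcal{E}_{k,q}/\partial g_r=-\sum_{i\le r<j}q\,(x_j-x_i)^{q-1}e^{-(x_j-x_i)^q}$. You also supply details the paper leaves implicit: one-sided differentiability of $e^{-t^q}$ at $t=0$ for $q\ge 1$ (with the convention $0^0=1$ at $q=1$), and a self-contained pairwise-exchange argument that yields $\lambda=\max_r F_r$ without quoting general KKT theory.
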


\begin{proof}
This is the standard Karush-Kuhn-Tucker condition for minimization on the simplex. Differentiating \eqref{eq:gapenergy} with respect to $g_r$ gives
\[
\frac{\partial \mathcal{E}_{k,q}}{\partial g_r}
=
-\sum_{i\le r<j} q\,(x_j-x_i)^{q-1}e^{-(x_j-x_i)^q}.
\]
\end{proof}

\section{The local transition at the origin}

The asymptotic expansion
\[
e^{-d^q} = 1-d^q+O(d^{2q})\qquad (d\downarrow 0)
\]
suggests the fundamental dichotomy between $0<q<1$ and $q\ge 1$.

\begin{proposition}[No collisions for $0<q<1$]
Fix $k\ge 2$ and $0<q<1$. Then no minimizer of \eqref{eq:defenergy} has a zero gap. In particular,
\[
0=x_1 < x_2 < \cdots < x_k =1.
\]
\end{proposition}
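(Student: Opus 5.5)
The plan is a direct local perturbation argument by contradiction. Suppose a minimizer $x=(x_1,\dots,x_k)$ has some zero gap. By the Full-span property, $x_1=0$ and $x_k=1$, so not all points coincide. Group the points into maximal clusters of coincident points, and fix a cluster $C=\{x_a=\cdots=x_b\}$ at position $p$ with multiplicity $m=b-a+1\ge 2$. By maximality, any gap adjacent to $C$ (namely $g_{a-1}$ and $g_b$, when they exist) is strictly positive. Since $0\neq 1$, the cluster cannot contain both endpoints, so at least one side of $C$ has a neighbouring point at positive distance. Without loss of generality (using the reflection symmetry $x\mapsto 1-x$ if needed) the right side does, i.e.\ $b<k$ and $g_b>0$. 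Set $\delta:=\min\{g_r: g_r>0\}>0$.

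Perturb by moving only the rightmost cluster point $x_b$ to $x_b+\varepsilon$ with $0<\varepsilon<\delta/2$. This preserves the ordering and keeps the configuration in $[0,1]$. The energy change splits into two groups of pairs. For the $m-1$ pairs $(x_i,x_b)$ with $a\le i<b$, the distance goes from $0$ to $\varepsilon$, contributing
\[
(m-1)\bigl(e^{-\varepsilon^q}-1\bigr) = -(m-1)\varepsilon^q+O(\varepsilon^{2q}).
\]
For every other pair containing $x_b$, the distance is at least $\delta$ before the move and at least $\delta/2$ after it, and it changes by exactly $\varepsilon$. On $[\delta/2,1]$ the kernel $d\mapsto e^{-d^q}$ is $C^1$, with derivative bounded by $L_\delta:=\sup_{[\delta/2,1]} q d^{q-1}e^{-d^q}<\infty$. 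These pairs therefore contribute at most $(k-m)L_\delta\,\varepsilon$ in absolute value. Pairs not involving $x_b$ are unchanged.

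Hence the total change is at most $-(m-1)\varepsilon^q + C\varepsilon + O(\varepsilon^{2q})$ for a constant $C$ independent of $\varepsilon$. Since $0<q<1$, we have $\varepsilon^q/\varepsilon\to\infty$ and $\varepsilon^{2q}=o(\varepsilon^q)$, so the change is strictly negative for small $\varepsilon>0$. This contradicts minimality, so every gap is positive, and together with the Full-span property this gives $0=x_1<\cdots<x_k=1$.

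The only delicate step is choosing which point to move and in which direction so that all non-cluster distances stay uniformly away from $0$, where the kernel fails to be Lipschitz. Moving an extreme point of a maximal cluster toward a side with a positive gap, with $\varepsilon<\delta/2$, handles this, and maximality of the cluster is what guarantees $\delta>0$ on that side. Convexity is not needed, consistent with the Remark.
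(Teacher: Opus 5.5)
Your proof is correct and follows the paper's argument: open a zero gap to width $\varepsilon$ at the expense of a positive gap, so that the energy decrease of order $\varepsilon^q$ outweighs the $O(\varepsilon)$ cost when $0<q<1$. Moving only the extreme cluster point $x_b$ is the adjacent-gap case $s=r+1$ of the paper's transfer $g_r\mapsto\varepsilon$, $g_s\mapsto g_s-\varepsilon$. This choice makes the bookkeeping cleaner, since only pairs containing $x_b$ change, and it avoids the paper's loosely worded claim that every pair crossing $g_r$ gains $1-e^{-\varepsilon^q}$, which is not true for pairs crossing both $g_r$ and $g_s$.
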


\begin{proof}
Assume a minimizer has a zero gap, say $g_r=0$. Because the full span equals $1$, there exists some index $s$ with $g_s>0$. For $\varepsilon>0$ small, define a perturbed gap vector $g^{(\varepsilon)}$ by increasing $g_r$ to $\varepsilon$ and decreasing $g_s$ by $\varepsilon$, leaving all other gaps unchanged. This keeps the total sum equal to $1$.

Pairs that cross the formerly zero gap $g_r$ gain at least
\[
1-e^{-\varepsilon^q}=\varepsilon^q+O(\varepsilon^{2q}).
\]
Hence the total energy decreases by a quantity bounded below by $c_1\varepsilon^q$ for some $c_1>0$.

Pairs affected by the decrease of the positive gap $g_s$ lie at a strictly positive distance at $\varepsilon=0$, so their change is differentiable and bounded in magnitude by $c_2\varepsilon$ for some $c_2>0$.

Therefore
\[
E_{k,q}(g^{(\varepsilon)})-E_{k,q}(g)
\le
-c_1\varepsilon^q + c_2\varepsilon + o(\varepsilon).
\]
Since $0<q<1$, one has $\varepsilon^q\gg \varepsilon$ as $\varepsilon\downarrow 0$, and the right-hand side is negative for all sufficiently small $\varepsilon$. This contradicts minimality.
\end{proof}

\begin{remark}[Threshold interpretation]
At $q=1$ the two contributions are both of order $\varepsilon$, so the local argument no longer forces positivity of every gap. This explains why the kernel $e^{-d}$ is the threshold case at which collisions first become possible. For $q>1$, the gain from opening a zero gap is only of order $\varepsilon^q$, which is too small to dominate the global $O(\varepsilon)$ balance.
\end{remark}

\section{The threshold case $q=1$: endpoint clustering}

For $q=1$ we recover the exponential kernel
\[
K(d)=e^{-d}.
\]
The computed minimizers on $[0,1]$ display a remarkably regular endpoint-clustering law. For $k=2,\dots,20$, the numerics support
\[
m(k)=\left\lfloor \frac{k+1}{3}\right\rfloor
\]
coincident points at each boundary, and therefore
\[
z(k)=m(k)-1=\left\lfloor \frac{k-2}{3}\right\rfloor
\]
zero gaps at each side. Equivalently, only a central block of approximately one third of the gaps remains active.

For $k=1,\dots,10$ the minimizing configurations are
\[
\begin{aligned}
&k=1:\ (0),\qquad
k=2:\ (0,1),\qquad
k=3:\ (0,\tfrac12,1),\\
&k=4:\ (0,0.121997,0.878003,1),\\
&k=5:\ (0,0,\tfrac12,1,1),\\
&k=6:\ (0,0,0.251705,0.748295,1,1),\\
&k=7:\ (0,0,0.070484,\tfrac12,0.929516,1,1),\\
&k=8:\ (0,0,0,0.312315,0.687685,1,1,1),\\
&k=9:\ (0,0,0,0.167687,\tfrac12,0.832313,1,1,1),\\
&k=10:\ (0,0,0,0.049548,0.349849,0.650151,0.950452,1,1,1).
\end{aligned}
\]

\section{Critical exponents for $q>1$}

We now ask for which values of $q>1$ interior points cease to be optimal. For each $k\ge 3$, define
\[
q_k:=\inf\Bigl\{q>1:\ \text{all tested minimizers for } E_{k,q}\text{ are supported only on }\{0,1\}\Bigr\}.
\]
The phrase ``supported only on $\{0,1\}$'' means that every minimizing configuration found numerically consists entirely of a balanced split between the two endpoints. We next derive exact and numerical formulas for $q_k$ when $k=3,\dots,20$.

\subsection{Odd $k$: an exact universal critical value}

Let $k=2m+1$ with $m\ge 1$. Consider the symmetric competitor with one interior point,
\[
X_{\mathrm{odd,int}} = ( \underbrace{0,\dots,0}_m,\tfrac12,\underbrace{1,\dots,1}_m),
\]
and the endpoint-only configuration
\[
X_{\mathrm{odd,end}} = (\underbrace{0,\dots,0}_m,\underbrace{1,\dots,1}_{m+1}),
\]
up to reflection.

Their energies are
\[
E_{\mathrm{odd,int}}(q)=m(m-1)+m^2e^{-1}+2m e^{-2^{-q}},
\]
and
\[
E_{\mathrm{odd,end}}(q)=m^2+m(m+1)e^{-1}.
\]
Equating them gives
\[
2e^{-2^{-q}} = 1+e^{-1},
\]
hence
\begin{equation}
q_{2m+1}
=
\frac{\log\!\Bigl(1/\bigl[-\log((1+e^{-1})/2)\bigr]\Bigr)}{\log 2}
\approx 1.396363475.
\label{eq:oddq}
\end{equation}

\begin{proposition}
For every odd $k=2m+1$, the explicit one-midpoint branch
\[
X_{\mathrm{odd,int}}=(0^{(m)},\tfrac12,1^{(m)})
\]
and the balanced endpoint-only branch
\[
X_{\mathrm{odd,end}}=(0^{(m)},1^{(m+1)})
\]
cross exactly at the universal value \eqref{eq:oddq}. More precisely,
\[
E_{\mathrm{odd,int}}(q)-E_{\mathrm{odd,end}}(q)
=
 m\Bigl(2e^{-2^{-q}}-(1+e^{-1})\Bigr),
\]
so the sign of the branch difference is independent of $m$.
\end{proposition}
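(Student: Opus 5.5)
The plan is a direct pair count: compute both energies by sorting the $\binom{2m+1}{2}$ pairs by their distance, subtract, and then read off where the difference vanishes. No convexity or KKT argument is needed, because the proposition only compares two explicit configurations.

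For $X_{\mathrm{odd,int}}=(0^{(m)},\tfrac12,1^{(m)})$ there are four kinds of pairs:
\begin{itemize}
\item distance $0$ inside the left cluster: $\binom m2$ pairs;
\item distance $0$ inside the right cluster: $\binom m2$ pairs, so together with the previous item $m(m-1)$ pairs of weight $e^{0}=1$;
\item distance $1$ between the two clusters: $m^2$ pairs of weight $e^{-1}$;
\item distance $\tfrac12$ between the midpoint and either cluster: $2m$ pairs of weight $e^{-2^{-q}}$.
\end{itemize}
This gives $E_{\mathrm{odd,int}}(q)=m(m-1)+m^2e^{-1}+2me^{-2^{-q}}$. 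For $X_{\mathrm{odd,end}}=(0^{(m)},1^{(m+1)})$ there are two kinds:
\begin{itemize}
\item distance $0$: $\binom m2+\binom{m+1}2=m^2$ pairs of weight $1$;
\item distance $1$: $m(m+1)$ pairs of weight $e^{-1}$.
\end{itemize}
This gives $E_{\mathrm{odd,end}}(q)=m^2+m(m+1)e^{-1}$. The reflected configuration $(0^{(m+1)},1^{(m)})$ has the same energy by the reflection symmetry recorded in Section~2, so ``up to reflection'' is harmless.

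Subtracting,
\[
E_{\mathrm{odd,int}}-E_{\mathrm{odd,end}}=(m^2-m-m^2)+(m^2-m^2-m)e^{-1}+2me^{-2^{-q}}=m\bigl(2e^{-2^{-q}}-1-e^{-1}\bigr),
\]
which is the claimed factorization. Since $m\ge1>0$, the sign of the difference is the sign of $\phi(q):=2e^{-2^{-q}}-(1+e^{-1})$, and this does not depend on $m$.

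It remains to show that $\phi$ has exactly one zero and that this zero is \eqref{eq:oddq}. The map $q\mapsto 2^{-q}$ is strictly decreasing, and $u\mapsto e^{-u}$ is strictly decreasing, so $\phi$ is strictly increasing on $(0,\infty)$. Hence $\phi$ has at most one zero.

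Solving $\phi(q)=0$ gives $e^{-2^{-q}}=c$ with $c:=(1+e^{-1})/2\in(e^{-1},1)$. Then $2^{-q}=-\log c\in(0,1)$, and therefore $q=\log(1/(-\log c))/\log 2>0$, which is exactly \eqref{eq:oddq}. Substituting $c\approx0.68394$ should reproduce the numerical value $\approx1.39636$. Because $\phi$ is strictly increasing, the interior branch has lower energy for $q<q_{2m+1}$ and the endpoint branch has lower energy for $q>q_{2m+1}$.

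The main place an error could enter is the pair bookkeeping in the endpoint configuration. There the two clusters have unequal sizes $m$ and $m+1$, and the identity $\binom m2+\binom{m+1}2=m^2$ is the step that makes the $m^2$ terms cancel exactly in the difference. Everything after that is a one-variable monotonicity check.
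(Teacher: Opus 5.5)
Your proposal is correct and follows the paper's route: write down the two explicit branch energies, subtract, and solve $2e^{-2^{-q}}=1+e^{-1}$. Your pair count derives the energy formulas that the paper only states, and your strict-monotonicity argument for $\phi$ makes explicit two points the paper leaves implicit: the crossing is unique, and the one-midpoint branch has lower energy exactly for $q<q_{2m+1}$.
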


\begin{proof}
Subtracting the above formulas for $E_{\mathrm{odd,int}}(q)$ and $E_{\mathrm{odd,end}}(q)$ gives
\[
E_{\mathrm{odd,int}}(q)-E_{\mathrm{odd,end}}(q)
=
 m\Bigl(2e^{-2^{-q}}-(1+e^{-1})\Bigr).
\]
Hence the two branches cross exactly when
\[
2e^{-2^{-q}}=1+e^{-1},
\]
which is equivalent to \eqref{eq:oddq}. Since the factor $m$ is positive, the sign of the branch difference is independent of $m$.
\end{proof}

\begin{remark}[Strengthened odd-$k$ evidence]
The proposition proves that the crossing between the two natural odd branches is universal for \emph{all} odd $k$. What remains open is whether this branch crossing is the true global transition for every odd $k$, i.e. whether no competing branch with two or more interior points can have lower energy near the transition.

For the odd values checked numerically beyond Table~\ref{tab:qk}, namely $k=11,13,15,19$, the same transition persists. Writing $q_{\mathrm{odd}}$ for the value in \eqref{eq:oddq}, one finds for example at $k=11$ that
\[
E_{\mathrm{odd,int}}(1.35)\approx 35.95205407 < 36.03638324 \approx E_{\mathrm{odd,end}},
\]
while at $q=1.43$,
\[
E_{\mathrm{odd,int}}(1.43)\approx 36.09652229 > 36.03638324 \approx E_{\mathrm{odd,end}}.
\]
The same sign change is observed for $k=13,15,19$, supporting the conjecture that \eqref{eq:oddq} is the true critical exponent for every odd $k$.
\end{remark}

\subsection{Even $k$: a one-parameter symmetric branch}

Let $k=2m$ with $m\ge 2$. The natural interior branch consists of two symmetric interior points,
\[
X_{\mathrm{even,int}}(a)=
(\underbrace{0,\dots,0}_{m-1},a,1-a,\underbrace{1,\dots,1}_{m-1}),
\qquad 0\le a\le \frac12,
\]
with energy
\begin{align}
E_{\mathrm{even,int}}(a;q)
&=
(m-1)(m-2)+(m-1)^2e^{-1}
\\
&\quad
+2(m-1)\bigl(e^{-a^q}+e^{-(1-a)^q}\bigr)
+e^{-(1-2a)^q}.
\label{eq:evenbranch}
\end{align}
The endpoint-only competitor is
\[
X_{\mathrm{even,end}}=
(\underbrace{0,\dots,0}_{m},\underbrace{1,\dots,1}_{m}),
\]
with energy
\[
E_{\mathrm{even,end}}(q)=m(m-1)+m^2e^{-1}.
\]
Thus the even critical exponent is determined numerically by the equation
\[
\min_{0\le a\le 1/2} E_{\mathrm{even,int}}(a;q)=E_{\mathrm{even,end}}(q).
\]

\begin{table}[t]
\centering
\caption{Critical exponents $q_k$ for $k=3,\dots,10$. For odd $k$ the value is exact and independent of $k$; for even $k$ the value is numerical, obtained by comparing the symmetric interior branch \eqref{eq:evenbranch} with the balanced endpoint-only split.}
\label{tab:qk}
\begin{tabular}{>{$}c<{$} >{$}c<{$} l}
\toprule
k & q_k & transition branch \\
\midrule
3 & 1.396363475 & odd: $(0^{(m)},\frac12,1^{(m)})$ \\
4 & 1.062682507 & even: $(0^{(m-1)},a,1-a,1^{(m-1)})$ \\
5 & 1.396363475 & odd: $(0^{(m)},\frac12,1^{(m)})$ \\
6 & 1.155601329 & even: $(0^{(m-1)},a,1-a,1^{(m-1)})$ \\
7 & 1.396363475 & odd: $(0^{(m)},\frac12,1^{(m)})$ \\
8 & 1.206132611 & even: $(0^{(m-1)},a,1-a,1^{(m-1)})$ \\
9 & 1.396363475 & odd: $(0^{(m)},\frac12,1^{(m)})$ \\
10 & 1.238523533 & even: $(0^{(m-1)},a,1-a,1^{(m-1)})$ \\
\bottomrule
\end{tabular}
\end{table}

\begin{table}[t]
\centering
\caption{Extended numerical values of the even critical exponents $q_{2m}$ up to $k=20$. The data are consistent with monotone increase in the tested range.}
\label{tab:qk-even-extended}
\small
\setlength{\tabcolsep}{7pt}
\renewcommand{\arraystretch}{1.15}
\begin{tabular}{c c c c c c}
\toprule
$k$ & $q_k$ & $k$ & $q_k$ & $k$ & $q_k$ \\
\midrule
4  & 1.062682507 & 10 & 1.238523533 & 16 & 1.291510874 \\
6  & 1.155601329 & 12 & 1.261308114 & 18 & 1.302082885 \\
8  & 1.206132611 & 14 & 1.278305167 & 20 & 1.310744185 \\
\bottomrule
\end{tabular}
\normalsize
\end{table}

The resulting values are displayed in Tables~\ref{tab:qk} and \ref{tab:qk-even-extended}. In particular,
\[
\begin{aligned}
&q_4\approx 1.062682507,\quad
q_6\approx 1.155601329,\quad
q_8\approx 1.206132611,\quad
q_{10}\approx 1.238523533,\\
&q_{12}\approx 1.261308114,\quad
q_{14}\approx 1.278305167,\quad
q_{16}\approx 1.291510874,\quad
q_{18}\approx 1.302082885,\quad
q_{20}\approx 1.310744185.
\end{aligned}
\]
The odd values remain pinned at \eqref{eq:oddq}, whereas the even values increase with $k$ throughout the tested range $4\le k\le 20$.

\section{The asymptotic regime $q\to 0^+$}

The small-$q$ limit does not lead to equally spaced points. Instead, the first nontrivial term in the expansion of the energy selects the classical Fekete configuration on the interval.

\begin{proposition}[Small-$q$ asymptotics]
Fix $k\ge 2$ and let $X=(x_1,\dots,x_k)$ with $0\le x_1<\cdots <x_k\le 1$. Then, as $q\to 0^+$,
\begin{equation}
E_{k,q}(X)
=
\binom{k}{2}e^{-1}
-
q e^{-1}\sum_{1\le i<j\le k}\log |x_j-x_i|
+O(q^2),
\label{eq:smallqexpansion}
\end{equation}
where the $O(q^2)$ term is uniform on compact subsets of the open simplex $0<x_1<\cdots <x_k<1$.
\end{proposition}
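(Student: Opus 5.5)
The plan is a termwise Taylor expansion in $q$ of each summand. Fix a pair $i<j$ and write $d=d_{ij}=x_j-x_i\in(0,1]$. Then $d^q=e^{q\log d}$, and for fixed $d>0$ the function
\[
\phi_d(q):=\exp\bigl(-e^{q\log d}\bigr)
\]
is real-analytic in $q$ on all of $\mathbb{R}$. At $q=0$ it takes the value $e^{-1}$. Its derivative is
\[
\phi_d'(q)=-\log d\; e^{q\log d}\exp\bigl(-e^{q\log d}\bigr),
\]
so $\phi_d'(0)=-e^{-1}\log d$. Taylor's theorem with Lagrange remainder then gives
\[
e^{-d^q}=e^{-1}-q\,e^{-1}\log d+\tfrac12 q^2\phi_d''(\xi)
\]
for some $\xi\in(0,q)$. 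Summing over the $\binom{k}{2}$ pairs produces the main terms of \eqref{eq:smallqexpansion} exactly. What remains is to bound the remainders.

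For the remainder, set $L=-\log d\ge 0$ and $u=e^{-\xi L}\in(0,1]$. A direct computation gives
\[
\phi_d''(\xi)=L^2\,u\,(u-1)\,e^{-u}.
\]
Since $|u(u-1)e^{-u}|\le 1$ for $u\in(0,1]$, we get $|\phi_d''(\xi)|\le (\log d)^2$, uniformly in $\xi\ge 0$. The whole error is therefore bounded by
\[
\frac{q^2}{2}\sum_{i<j}(\log d_{ij})^2 .
\]
This bound holds for every $q>0$, not only small $q$.

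Uniformity then follows. On a compact subset $C$ of the open simplex $\{0<x_1<\cdots<x_k<1\}$, the continuous function $\min_r(x_{r+1}-x_r)$ attains a positive minimum $\delta$. Hence every $d_{ij}\ge\delta$, so $(\log d_{ij})^2\le(\log\delta)^2$, and the $O(q^2)$ constant can be taken as $\binom{k}{2}(\log\delta)^2/2$, uniformly on $C$. In fact the argument only uses that gaps are bounded below; uniformity holds on any set with gaps $\ge\delta$, including configurations touching the endpoints.

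The only step needing care is the second-derivative bound, specifically confirming that it is uniform in the intermediate point $\xi$. The substitution $u=e^{-\xi L}\in(0,1]$ handles this. I do not expect a genuine obstacle. I would also remark that uniformity necessarily fails near collisions: there $\log d\to-\infty$, and the true behaviour ($e^{-d^q}\to 1$ as $d\to0$ for fixed $q$) is not captured by the expansion. This is why the statement is restricted to compact subsets.
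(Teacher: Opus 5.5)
Your proof is correct and takes the paper's route: expand each summand $e^{-d_{ij}^q}$ to first order in $q$ about $q=0$, then sum over the $\binom{k}{2}$ pairs. The paper chains the expansions $d^q=1+q\log d+O(q^2)$ and $\exp(-q\log d+O(q^2))=1-q\log d+O(q^2)$ without tracking constants, whereas your explicit Lagrange-remainder bound $\tfrac{q^2}{2}\sum_{i<j}(\log d_{ij})^2$ is what actually justifies the uniformity on compact subsets that the statement asserts.
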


\begin{proof}
For every pair with $d_{ij}:=|x_j-x_i|\in (0,1]$,
\[
d_{ij}^q=e^{q\log d_{ij}}=1+q\log d_{ij}+O(q^2)
\qquad (q\to 0^+).
\]
Hence
\[
e^{-d_{ij}^q}
=
 e^{-1}\exp\!\bigl(-q\log d_{ij}+O(q^2)\bigr)
=
 e^{-1}\bigl(1-q\log d_{ij}+O(q^2)\bigr).
\]
Summing over all $1\le i<j\le k$ gives \eqref{eq:smallqexpansion}.
\end{proof}

\begin{corollary}[Fekete-point limit as $q\to 0^+$]
For fixed $k\ge 2$, any family of minimizers of $E_{k,q}$ converges, as $q\to 0^+$, to a maximizer of
\[
\sum_{1\le i<j\le k}\log|x_j-x_i|,
\]
that is, to a Fekete configuration on $[0,1]$. On the interval this configuration is given explicitly by the affine Chebyshev--Lobatto points
\begin{equation}
\xi_j=\frac{1-\cos\!\bigl(j\pi/(k-1)\bigr)}{2},
\qquad j=0,\dots,k-1.
\label{eq:lobatto}
\end{equation}
\end{corollary}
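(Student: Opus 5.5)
The plan is a $\Gamma$-convergence style argument built on the expansion \eqref{eq:smallqexpansion}, combined with a uniform separation estimate, followed by the classical identification of the one-dimensional Fekete points. Set
\[
F_q(X):=\frac{e\,E_{k,q}(X)-\binom{k}{2}}{q},\qquad L(X):=\sum_{i<j}\log|x_j-x_i|.
\]
The proposition gives $F_q(X)=-L(X)+O(q)$ uniformly on compact subsets of the open simplex. Minimizing $E_{k,q}$ is the same as minimizing $F_q$. The goal is therefore to show that minimizers of $F_q$ accumulate only at minimizers of $-L$. Uniqueness of the Fekete configuration then yields convergence of the whole family, not just of subsequences.

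\emph{Step 1: uniform separation.} I must rule out that minimizers $X^{(q)}$ have gaps tending to $0$ as $q\to0^+$. The expansion is not uniform near the boundary, so I work directly with the kernel instead. For all $d\in[0,1]$ one has $d^q\le 1$, hence $e^{-d^q}\ge e^{-1}$ and $F_q\ge 0$. Moreover $e^{-d^q}-e^{-1}\ge e^{-1}(1-d^q)$, because $e^{1-d^q}-1\ge 1-d^q$.

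Let $\delta=\min_{i<j}|x_j-x_i|$. Then
\[
F_q(X)\ge \frac{1-\delta^q}{q}\cdot 1.
\]
The right-hand side is decreasing in $q$ and tends to $-\log\delta$ as $q\to0^+$, so it is bounded below by $\min\{-\tfrac12\log\delta,\ 1/(2q)\}$ for small $q$.

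For comparison, evaluate at the fixed configuration $\xi$ from \eqref{eq:lobatto}. This gives $F_q(X^{(q)})\le F_q(\xi)=-L(\xi)+O(q)\le C$. Combining the two bounds forces $\delta\ge e^{-2C}$ once $q$ is small. All minimizers therefore lie in a fixed compact subset $K_\eta$ of the open simplex, on which the $O(q)$ error is uniform. (By the Full-span Lemma, $x_1=0$ and $x_k=1$ hold anyway; I work in the closed simplex with $\delta$ bounded away from $0$.) I expect this step to be the main obstacle, since it is exactly where the non-uniformity of the expansion bites. The elementary lower bound above is what I would try first.

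\emph{Step 2: limit identification.} Take any subsequence with $X^{(q)}\to X^*\in K_\eta$. For any $Y\in K_\eta$,
\[
-L(X^{(q)})-Cq\le F_q(X^{(q)})\le F_q(Y)\le -L(Y)+Cq.
\]
Continuity of $L$ on $K_\eta$ gives $L(X^*)\ge L(Y)$. Every competitor $Y$ with $L(Y)>-\infty$ lies in some $K_{\eta'}$, so enlarging the compact set handles all such $Y$. Hence $X^*$ maximizes $L$.

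\emph{Step 3: explicit Fekete points.} It remains to show that the maximizer of $L$ with $x_1=0$, $x_k=1$ is unique and equals \eqref{eq:lobatto}. I would use Stieltjes' classical electrostatic argument. First, $L$ is strictly concave in the interior variables $x_2,\dots,x_{k-1}$, because $\log$ is strictly concave in differences with fixed endpoints. Second, the stationarity equations $\sum_{j\ne i}1/(x_i-x_j)=0$ for $2\le i\le k-1$ say that $p(x)=\prod_{i=2}^{k-1}(x-x_i)$ satisfies $x(1-x)p''+2(1-2x)p'+\lambda p=0$. This is the Jacobi equation whose polynomial solution is the derivative of the shifted Legendre polynomial, $P'_{k-1}(2x-1)$. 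Its zeros, together with $0$ and $1$, are the Gauss--Lobatto--Legendre points. The corollary writes these as $\tfrac12(1-\cos(j\pi/(k-1)))$, which are the Chebyshev extrema rather than the Legendre ones. So at this step I would verify with care which explicit node family maximizes $L$, and adjust the formula if needed. By uniqueness, the whole family of minimizers converges to that node family.
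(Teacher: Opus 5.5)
Your Steps 1--2 are sound. Your Step 3 is also correct, and you should drop the hedge: the explicit formula \eqref{eq:lobatto} in the statement is false for every $k\ge 4$, so no proof of it can exist.

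Your Stieltjes computation is right. The interior nodes of the unique maximizer satisfy $x(1-x)p''+2(1-2x)p'+(k-2)(k+1)p=0$. Hence they are the zeros of $P'_{k-1}(2x-1)$, which has Jacobi parameters $(1,1)$; these are the affine Gauss--Lobatto--Legendre points. The Chebyshev--Lobatto interior nodes are instead the zeros of $U_{k-2}(2x-1)$, with Jacobi parameters $(\tfrac12,\tfrac12)$, and the two families agree only for $k=2,3$.

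A quick check for $k=4$: at $(0,\tfrac14,\tfrac34,1)$ one has $\partial L/\partial x_2=4-2-\tfrac43=\tfrac23\neq 0$, so this is not even a critical point of $L$. The true maximizer $(0,a,1-a,1)$ solves $5a^2-5a+1=0$, giving $a=(5-\sqrt5)/10\approx 0.2764$. For $k=5$ the correct node is $(7-\sqrt{21})/14\approx 0.1727$, not $(2-\sqrt2)/4\approx 0.1464$.

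The paper's proof fails at exactly this point. After the first-order reduction it simply asserts that the Fekete points on an interval ``are exactly the affine Chebyshev--Lobatto points.'' That is a misidentification, and the worked examples after the corollary inherit the error. The correct corollary replaces \eqref{eq:lobatto} by $\{0,1\}\cup\{x: P'_{k-1}(2x-1)=0\}$.

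For the convergence half, your route is more complete than the paper's. The paper only says that minimizing $E_{k,q}$ is ``equivalent, to first order'' to maximizing $L$. That says nothing about minimizers drifting toward collisions, where the expansion is not uniform. Indeed, the proposition claims uniformity only on the open simplex $0<x_1<\cdots<x_k<1$, which contains no minimizer, since $x_1=0$ and $x_k=1$.

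Your separation bound supplies the missing compactness: a single pair already forces $F_q\ge(1-\delta^q)/q$, while a fixed competitor gives $F_q\le C$. Strict concavity of $L$ in $(x_2,\dots,x_{k-1})$ on the ordered chamber then gives uniqueness, and hence convergence of the whole family.

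One small repair is needed. The inequality $(1-\delta^q)/q\ge\min\{\tfrac12\log(1/\delta),\,1/(2q)\}$ does not follow from monotonicity in $q$; that only bounds the quantity above by its limit. Prove it directly with $x=q\log(1/\delta)$: use $(1-e^{-x})/x\ge 1-e^{-1}>\tfrac12$ for $0<x\le 1$, and $1-e^{-x}\ge 1-e^{-1}$ for $x\ge 1$. The inequality then holds for all $q>0$.
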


\begin{proof}
The leading constant in \eqref{eq:smallqexpansion} is independent of the configuration. Therefore minimizing $E_{k,q}$ for small $q>0$ is equivalent, to first order, to maximizing $\sum_{i<j}\log|x_j-x_i|$, or equivalently the Vandermonde product $\prod_{i<j}|x_j-x_i|$. On an interval the maximizers are the Fekete points, which are exactly the affine Chebyshev--Lobatto points \eqref{eq:lobatto}.
\end{proof}

Thus the limit $q\to 0^+$ is \emph{not} uniform. For instance, for $k=4$ one obtains the limit configuration $(0,\tfrac14,\tfrac34,1)$, while for $k=5$ one obtains
\[
\Bigl(0,\frac{2-\sqrt2}{4},\frac12,\frac{2+\sqrt2}{4},1\Bigr),
\]
which is visibly more concentrated near the boundary than the uniform grid. As $k\to\infty$, the empirical measures of the points \eqref{eq:lobatto} converge to the arcsine law $\pi^{-1}(x(1-x))^{-1/2}\,dx$, rather than the uniform measure on $[0,1]$.

\begin{figure}[t]
\centering
\begin{tikzpicture}[x=7cm,y=1cm]
  % guide lines
  \draw[gray!60] (0,0) -- (1,0);
  \draw[gray!60] (0,-1.1) -- (1,-1.1);

  % boundary ticks
  \foreach \yy in {0,-1.1}{
    \draw[gray!70] (0,\yy+0.07) -- (0,\yy-0.07);
    \draw[gray!70] (1,\yy+0.07) -- (1,\yy-0.07);
  }

  % uniform points k=8
  \foreach \x in {0,0.142857,0.285714,0.428571,0.571429,0.714286,0.857143,1}{
    \fill[blue!60!black] (\x,0) circle (2pt);
  }

  % lobatto points k=8
  \foreach \x in {0,0.049516,0.188255,0.388740,0.611260,0.811745,0.950484,1}{
    \fill[red!70!black] (\x,-1.1) circle (2pt);
  }

  \node[anchor=east,font=\footnotesize] at (-0.01,0)
    {$\left(\frac{j}{7}\right)_{j=0}^7$};
  \node[anchor=east,font=\footnotesize] at (-0.01,-1.1)
    {$\left(\frac{1-\cos(j\pi/7)}{2}\right)_{j=0}^7$};
  \node[anchor=west,font=\footnotesize] at (1.01,0) {uniform};
  \node[anchor=west,font=\footnotesize] at (1.01,-1.1) {$q\to0^+$};
\end{tikzpicture}
\caption{Comparison for $k=8$: the uniform grid (top) versus the Chebyshev--Lobatto/Fekete limit predicted by $q\to 0^+$ (bottom). The small-$q$ limit is more concentrated near the endpoints and is therefore not uniform.}
\label{fig:smallqcomparison}
\end{figure}
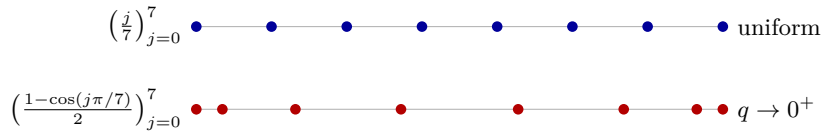

\section{Comparison with selected kernels}

The family $e^{-d^q}$ interpolates between several regimes already encountered in the broader discussion.

\subsection{The cusp-like kernel $e^{-\sqrt d}$ ($q=\frac12$)}

By Proposition~3.1, collisions are impossible for $q=\frac12$. The minimizing configurations for $k=1,\dots,10$ are listed in Table~\ref{tab:half}. They are symmetric and fully distinct, with small but strictly positive edge gaps.

\begin{table}[t]
\centering
\caption{Numerical minimizers for $q=\frac12$, i.e.\ for the kernel $e^{-\sqrt d}$, on the unit interval.}
\label{tab:half}
\begin{tabular}{c l}
\toprule
$k$ & minimizer \\
\midrule
$1$ & $(0)$ \\
$2$ & $(0, 1)$ \\
$3$ & $(0, 0.5, 1)$ \\
$4$ & $(0, 0.254547, 0.745453, 1)$ \\
$5$ & $(0, 0.139429, 0.5, 0.860571, 1)$ \\
$6$ & $(0, 0.081208, 0.337929, 0.662071, 0.918792, 1)$ \\
$7$ & $(0, 0.049814, 0.23337, 0.5, 0.76663, 0.950187, 1)$ \\
$8$ & $(0, 0.03194, 0.164831, 0.379072, 0.620928, 0.835169, 0.96806, 1)$ \\
$9$ & $(0, 0.021274, 0.1189, 0.290198, 0.5, 0.709802, 0.8811, 0.978726, 1)$ \\
$10$ & $(0, 0.014644, 0.087436, 0.224634, 0.403577, 0.596423, 0.775366, 0.912564, 0.985356, 1)$ \\
\bottomrule
\end{tabular}
\end{table}

\subsection{The threshold kernel $e^{-d}$ ($q=1$)}

At $q=1$ collisions first become possible. The first ten minimizing configurations are listed in Table~\ref{tab:one}.

\begin{table}[t]
\centering
\caption{Numerical minimizers for $q=1$, i.e.\ for the kernel $e^{-d}$, on the unit interval.}
\label{tab:one}
\begin{tabular}{c l}
\toprule
$k$ & minimizer \\
\midrule
$1$ & $(0)$ \\
$2$ & $(0, 1)$ \\
$3$ & $(0, 0.5, 1)$ \\
$4$ & $(0, 0.121997, 0.878003, 1)$ \\
$5$ & $(0, 0, 0.5, 1, 1)$ \\
$6$ & $(0, 0, 0.251705, 0.748295, 1, 1)$ \\
$7$ & $(0, 0, 0.070484, 0.5, 0.929516, 1, 1)$ \\
$8$ & $(0, 0, 0, 0.312315, 0.687685, 1, 1, 1)$ \\
$9$ & $(0, 0, 0, 0.167687, 0.5, 0.832313, 1, 1, 1)$ \\
$10$ & $(0, 0, 0, 0.049548, 0.349849, 0.650151, 0.950452, 1, 1, 1)$ \\
\bottomrule
\end{tabular}
\end{table}

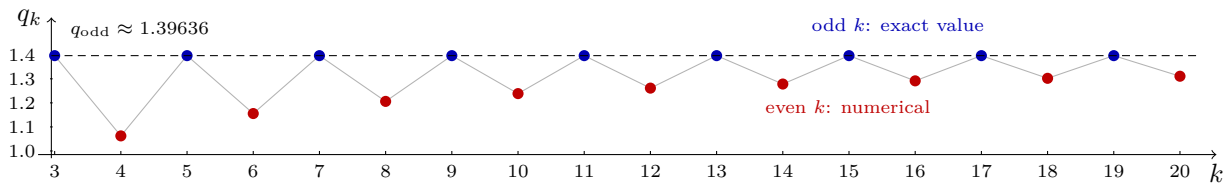
\begin{figure}[t!]
\centering
\begin{tikzpicture}[x=0.88cm,y=3.2cm]
\draw[->] (2.75,0.985) -- (20.55,0.985) node[below] {$k$};
\draw[->] (2.95,0.97) -- (2.95,1.555) node[left] {$q_k$};
\foreach \k in {3,4,...,20}{\draw (\k,0.985) -- (\k,1.000) node[below=2pt,font=\scriptsize] {\k};}
\foreach \y in {1.0,1.1,1.2,1.3,1.4}{\draw (2.93,\y) -- (2.97,\y) node[left=3pt,font=\scriptsize] {\y};}
\draw[gray!60] (3,1.396363475) -- (4,1.062682507) -- (5,1.396363475) -- (6,1.155601329) -- (7,1.396363475) -- (8,1.206132611) -- (9,1.396363475) -- (10,1.238523533) -- (11,1.396363475) -- (12,1.261308114) -- (13,1.396363475) -- (14,1.278305167) -- (15,1.396363475) -- (16,1.291510874) -- (17,1.396363475) -- (18,1.302082885) -- (19,1.396363475) -- (20,1.310744185);
\foreach \k/\q in {3/1.396363475,5/1.396363475,7/1.396363475,9/1.396363475,11/1.396363475,13/1.396363475,15/1.396363475,17/1.396363475,19/1.396363475}{\fill[blue!70!black] (\k,\q) circle (2.1pt);} 
\foreach \k/\q in {4/1.062682507,6/1.155601329,8/1.206132611,10/1.238523533,12/1.261308114,14/1.278305167,16/1.291510874,18/1.302082885,20/1.310744185}{\fill[red!75!black] (\k,\q) circle (2.1pt);} 
\draw[densely dashed] (2.95,1.396363475) -- (20.25,1.396363475);
\node[blue!70!black,anchor=west,font=\scriptsize] at (14.3,1.525) {odd $k$: exact value};
\node[red!75!black,anchor=west,font=\scriptsize] at (13.6,1.185) {even $k$: numerical};
\node[anchor=west,font=\scriptsize] at (3.1,1.505) {$q_{\mathrm{odd}}\approx 1.39636$};
\end{tikzpicture}
\caption{\label{fig:seq}Critical exponents $q_k$ for $k=3,\dots,20$. Odd values are exact and constant; even values are numerical.}
\end{figure}

\subsection{The Gaussian kernel $e^{-d^2}$ ($q=2$)}

For $q=2$, collisions are strongly favored. The numerically minimizing configurations for $k=1,\dots,10$ coincide with the balanced endpoint splits:
\[
(0),\ (0,1),\ (0,1,1),\ (0,0,1,1),\ (0,0,0,1,1),\ \dots
\]
This agrees with the general picture that flatter kernels near the origin make collisions more attractive.

\subsection{Comparison with Riesz energy}

For the singular Riesz kernel $d^{-s}$, $s>0$, collisions are impossible because the pair energy diverges at $d=0$. In that sense the bounded family $e^{-d^q}$ exhibits a genuine bounded-kernel phase transition absent from the Riesz case; see \cite{BorodachovHardinSaff2019,HardinSaff2004,HardinSaff2005} for the classical background on Riesz energies.

\section{Gradient-flow studies for different $(q,k)$ regimes}

To complement the static minimizers, it is helpful to visualize how point configurations move under the energy gradient. For this purpose, we generated a family of gradient-flow pictures in which the horizontal axis represents the unit interval $[0,1]$, while the vertical axis represents an artificial time variable $t$, increasing from top to bottom. Each horizontal slice therefore shows one intermediate configuration of the $k$ points, and the full picture records a discrete gradient-descent path. The resulting plots for $q=0.1$, $q=1$, $q=q_{\mathrm{critical}}(k)$, and $q=2$, with $k=9,10$, are shown in Figure~\ref{fig:gradflowqk}.

For fixed $k$ and $q$, we consider the energy
\[
E_{k,q}(x_1,\dots,x_k)
=
\sum_{1\le i<j\le k} e^{-|x_j-x_i|^q}.
\]
Its gradient can be written in closed form. Since the points are ordered,
\[
x_1\le x_2\le \cdots \le x_k,
\]
we have $|x_j-x_i|=x_j-x_i$ for $i<j$, and differentiation gives
\[
\frac{\partial E_{k,q}}{\partial x_r}
=
q\sum_{j=r+1}^{k} (x_j-x_r)^{q-1}e^{-(x_j-x_r)^q}
-
q\sum_{i=1}^{r-1} (x_r-x_i)^{q-1}e^{-(x_r-x_i)^q},
\qquad r=1,\dots,k.
\]
Equivalently,
\[
\nabla E_{k,q}(x)
=
\left(
\frac{\partial E_{k,q}}{\partial x_1},
\dots,
\frac{\partial E_{k,q}}{\partial x_k}
\right).
\]
We then evolve the points by repeated small steps in the negative gradient direction. If
\[
x^{(n)}=(x_1^{(n)},\dots,x_k^{(n)})
\]
denotes the configuration at step $n$, then the next configuration is obtained from
\[
x^{(n+1)}
\approx
x^{(n)}-\tau \nabla E_{k,q}(x^{(n)}),
\]
followed by a projection back to the ordered unit interval. In the numerical experiments used for the figures, the initial condition is a uniformly spaced configuration shifted slightly into the interior of the interval, so that the subsequent motion is not biased by starting directly at the boundary.

These pictures are not intended as rigorous proofs of global convergence. Rather, they serve as a geometric microscope for the phase transition. They make visible which collisions form first, how rapidly the endpoints attract points, and whether interior points survive for a long time or disappear quickly. In particular, they help distinguish the three regimes already suggested by the analytical results and by the computed minimizers.

For small values of $q$, such as $q=0.1$, the trajectories remain comparatively spread out. The strong cusp of the kernel near the origin produces a strong local resistance against collisions, and the flow therefore preserves a visibly collision-free structure for a long time. This is consistent with the proof that for $0<q<1$ minimizers cannot contain zero gaps.

At the threshold value $q=1$, the pictures begin to show endpoint clustering. Points drift toward the boundary and eventually form stacks at $0$ and $1$, while a smaller active block may remain in the interior. This dynamical picture agrees with the observed endpoint-clustering law and makes the threshold nature of the exponential kernel $e^{-d}$ visually transparent.

For flatter kernels, for example $q=2$, the descent is much more decisive. Points are rapidly absorbed into endpoint clusters, and the surviving interior structure collapses quickly. In this regime the pictures vividly illustrate why the endpoint-supported configurations become energetically favorable.

The most interesting panels are those near the critical exponents. For odd $k$, and in particular for $k=9$, the picture at
\[
q_{\mathrm{odd}}
=
\frac{\log\!\bigl(1/[-\log((1+e^{-1})/2)]\bigr)}{\log 2}
\approx 1.396363475
\]
shows the system poised between two competing tendencies: one toward a one-midpoint configuration and the other toward complete endpoint concentration. It is important, however, to interpret the gradient correctly. In the point variables, the local gradient at a non-endpoint point still depends on the current configuration through the distances to the other points, so it does not literally become constant. What does become uniform at equilibrium is the gradient in the gap variables on the active gaps, as expressed by the Karush--Kuhn--Tucker condition. Equivalently, neighboring active gap derivatives become equal, and therefore the corresponding point-gradient vanishes. This explains why, near the odd critical value, the trajectories in the active interior block appear almost straight: the force balance there is nearly uniform, even though the pointwise gradient still depends on the local geometry. Likewise, for even $k$, the corresponding critical pictures illustrate the competition between a symmetric interior branch and the endpoint-only branch. In this sense, the gradient-flow plots provide a dynamical visualization of the same transition that is detected analytically through the branch comparisons.

\section{Summary and conclusions}

\begin{figure}[h!]
\centering
\begin{tikzpicture}[x=4.4cm,y=0.34cm]
% axes
\fill[green!8] (0.00,3) rectangle (1.00,20.4);
\fill[orange!10] (1.00,3) rectangle (2.05,20.4);
\draw[->] (0,3) -- (2.12,3) node[below] {$q$};
\draw[->] (0,3) -- (0,20.8) node[left] {$k$};
\foreach \q/\lbl in {0/0,0.5/0.5,1/1,1.5/1.5,2/2}{\draw (\q,3) -- (\q,2.8) node[below=2pt,font=\scriptsize] {\lbl};}
\foreach \k in {4,6,8,10,12,14,16,18,20}{\draw (0.00,\k) -- (-0.03,\k) node[left=3pt,font=\scriptsize] {\k};}
\foreach \k in {3,5,7,9,11,13,15,17,19}{\draw (0.00,\k) -- (-0.02,\k);} 

% threshold markers
\draw[densely dashed,thick] (1.00,3) -- (1.00,20.2);
\node[anchor=west,font=\scriptsize] at (1.02,19.7) {$q=1$};
\draw[densely dashed,blue!70!black] (1.396363475,3) -- (1.396363475,20.2);
\node[anchor=west,font=\scriptsize,blue!70!black] at (1.41,18.6) {$q_{\mathrm{odd}}\approx 1.39636$};

% odd points
\foreach \k in {3,5,7,9,11,13,15,17,19}{\fill[blue!70!black] (1.396363475,\k) circle (1.6pt);} 
% even points
\draw[red!75!black,thick] (1.062682507,4) -- (1.155601329,6) -- (1.206132611,8) -- (1.238523533,10) -- (1.261308114,12) -- (1.278305167,14) -- (1.291510874,16) -- (1.302082885,18) -- (1.310744185,20);
\foreach \q/\k in {1.062682507/4,1.155601329/6,1.206132611/8,1.238523533/10,1.261308114/12,1.278305167/14,1.291510874/16,1.302082885/18,1.310744185/20}{\fill[red!75!black] (\q,\k) circle (1.8pt);} 

% annotations and tentative regions
\node[align=left,anchor=west,font=\scriptsize] at (0.10,19.5) {collision-free\\ regime $(0<q<1)$};
\node[align=left,anchor=west,font=\tiny] at (0.99,15.2) {partial\\ clustering};
\node[align=left,anchor=west,font=\scriptsize] at (1.47,8.0) {endpoint-only\\ states observed};
\draw[->,thin] (1.50,7.95) -- (1.36,10.5);

% simple legend
\fill[blue!70!black] (0.14,4.25) circle (1.6pt);
\node[anchor=west,font=\scriptsize] at (0.18,4.25) {odd critical values};
\fill[red!75!black] (0.14,3.55) circle (1.6pt);
\node[anchor=west,font=\scriptsize] at (0.18,3.55) {even critical values};
\end{tikzpicture}
\caption{A first attempt at a $(q,k)$ phase diagram for the kernel $e^{-|x-y|^q}$ on $[0,1]$. The green region marks the proved collision-free regime $0<q<1$. The blue vertical line at $q_{\mathrm{odd}}$ records the exact branch crossing for odd $k$, while the red points show the computed even critical values for $k=4,6,\dots,20$.}
\label{fig:phase-diagram}
\end{figure}
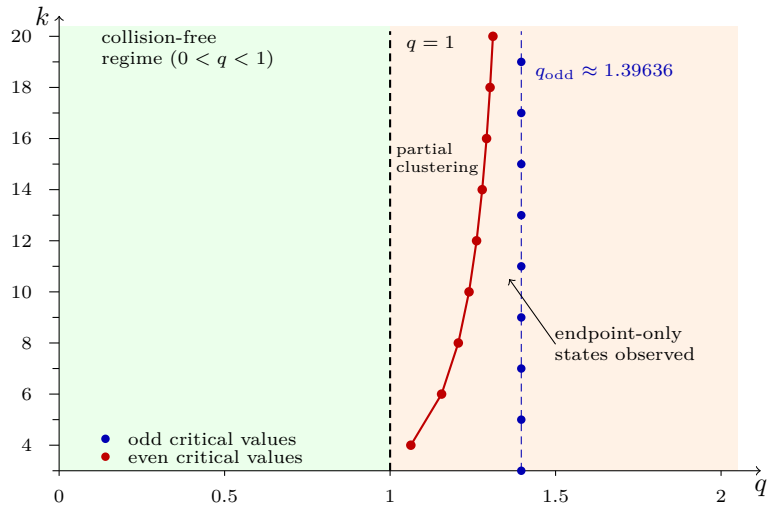

The one-dimensional family $e^{-|x-y|^q}$ exhibits a sharp and surprisingly rich transition in the collision structure of minimizers.
\begin{itemize}
\item For $0<q<1$, the singular local slope prevents all collisions.
\item At $q=1$, endpoint clustering first appears and follows a simple law in the tested range.
\item For $q>1$, interior points persist only up to critical exponents $q_k$.
\item For odd $k$, the critical exponent is the exact universal value \eqref{eq:oddq}.
\item For even $k$, the critical exponent is described by the symmetric two-interior-point branch and increases with $k$ for the tested values $k\le 20$.
\item As $q\to 0^+$, the minimizing configurations converge to the Chebyshev--Lobatto/Fekete points rather than to the uniform grid.
\end{itemize}

A first phase diagram in the $(k,q)$-plane is shown in Figure~\ref{fig:phase-diagram}. It summarizes the proved collision-free regime $0<q<1$, the threshold line $q=1$, the universal odd critical value \eqref{eq:oddq}, and the numerically computed even critical values up to $k=20$.

Several natural questions remain open. Is the odd critical value \eqref{eq:oddq} valid for all odd $k$? Does the sequence $(q_{2m})_m$ increase monotonically? Can one characterize the full large-$k$ phase diagram in the $(k,q)$-plane? These appear to be tractable questions for a fuller paper.

For further study and result verification we refer to \cite{EmmerichBlog2026} Includes an open-source online Python simulator for exploring numerically computed minimizing configurations for different values of $k$ and $q$.

An accompanying GitHub repository with Python code for the numerical results is available at
\begin{center}
\url{https://github.com/emmerichmtm/phaseTransitionExpKernelsUnitLine}
\end{center}

\appendix

\section{Appendix: Open questions}

This appendix records three natural follow-up questions suggested by the explicit formulas and numerics above. Each question appears approachable because the one-dimensional problem admits a gap formulation, a strong symmetry reduction, and for $q\ge 1$ a convex structure on the simplex of admissible gaps.

\subsection{Does the odd critical value persist for all odd $k$?}

Proposition~4.1 proves that the branch crossing between the one-midpoint odd configuration and the endpoint-only odd configuration occurs at the universal value \eqref{eq:oddq} for every odd $k=2m+1$. The remaining issue is global optimality: can one exclude the existence of a lower-energy competing branch with more than one interior point?

A plausible route is to combine symmetry with a finite-dimensional reduction. For odd $k=2m+1$, any symmetric configuration can be written as
\[
(0^{(a_0)},y_1,\dots,y_r,\tfrac12,z_r,\dots,z_1,1^{(a_1)})
\]
with $a_0+a_1+2r+1=2m+1$. At the candidate transition one would like to show that every such branch has energy at least that of the one-midpoint branch. Since the explicit branch difference in Proposition~4.1 is linear in $m$, the real challenge is not the crossing itself but the exclusion of additional odd branches.

The numerical evidence currently available is consistent with a positive answer: in addition to $k=3,5,7,9$, the values $k=11,13,15,19$ display the same sign change at the universal odd value. This strongly suggests the following conjecture.

\begin{conjecture}
For every odd $k=2m+1$, the true global critical exponent equals the universal odd value in \eqref{eq:oddq}.
\end{conjecture}

\subsection{Is the even sequence $(q_{2m})_m$ monotone?}

For even $k=2m$ the critical exponents obtained from the symmetric two-interior-point branch are
\[
\begin{aligned}
&q_4\approx 1.06268,
\qquad
q_6\approx 1.15560,
\qquad
q_8\approx 1.20613,
\qquad
q_{10}\approx 1.23852,\\
&q_{12}\approx 1.26131,
\qquad
q_{14}\approx 1.27831,
\qquad
q_{16}\approx 1.29151,
\qquad
q_{18}\approx 1.30208,
\qquad
q_{20}\approx 1.31074.
\end{aligned}
\]
This suggests that $(q_{2m})_m$ is increasing. See also Figure \ref{fig:seq}. If true, such monotonicity would be quite natural: as $k$ grows, more colliding pairs can be created at the endpoints, while the interval budget remains fixed, so one expects endpoint-only states to become favorable for progressively smaller interior branches.

A possible proof strategy is to study the even branch energy \eqref{eq:evenbranch} after minimizing in $a$ and to compare the resulting value for successive $m$. One would like to show that the function
\[
\Phi_m(q):=\min_{0\le a\le 1/2} E_{\mathrm{even,int}}(a;q)-E_{\mathrm{even,end}}(q)
\]
changes sign exactly once and that the zero of $\Phi_m$ moves monotonically with $m$. Because the formulas are explicit, this may be accessible either analytically or via computer-assisted inequalities.

\subsection{What is the large-$k$ phase diagram in the $(k,q)$-plane?}

The family $e^{-d^q}$ appears to exhibit three regimes.
\begin{itemize}
\item For $0<q<1$, all minimizers are collision-free.
\item At $q=1$, partial endpoint clustering emerges and empirically follows a one-third law.
\item For $q>1$, endpoint-only configurations eventually dominate, with a parity effect between odd and even $k$.
\end{itemize}
A fuller phase diagram would describe, for each pair $(k,q)$, the number of endpoint-collapsed points and the dimension of the active interior block.

For large $k$, a natural goal is to identify asymptotic transition curves separating regions with different numbers of active interior points. The odd/even dichotomy already visible in Tables~\ref{tab:qk} and \ref{tab:qk-even-extended} suggests that parity survives at finite scale, but it is not yet clear whether it persists in a meaningful asymptotic form. Another attractive question is whether the threshold law for $q=1$ can be generalized to nearby values of $q$ by a perturbative analysis.

At the end of this appendix we also note that an accompanying GitHub repository with Python code for the numerical results is available at
\begin{center}
\url{https://github.com/emmerichmtm/phaseTransitionExpKernelsUnitLine}
\end{center}

\begin{figure}[t]
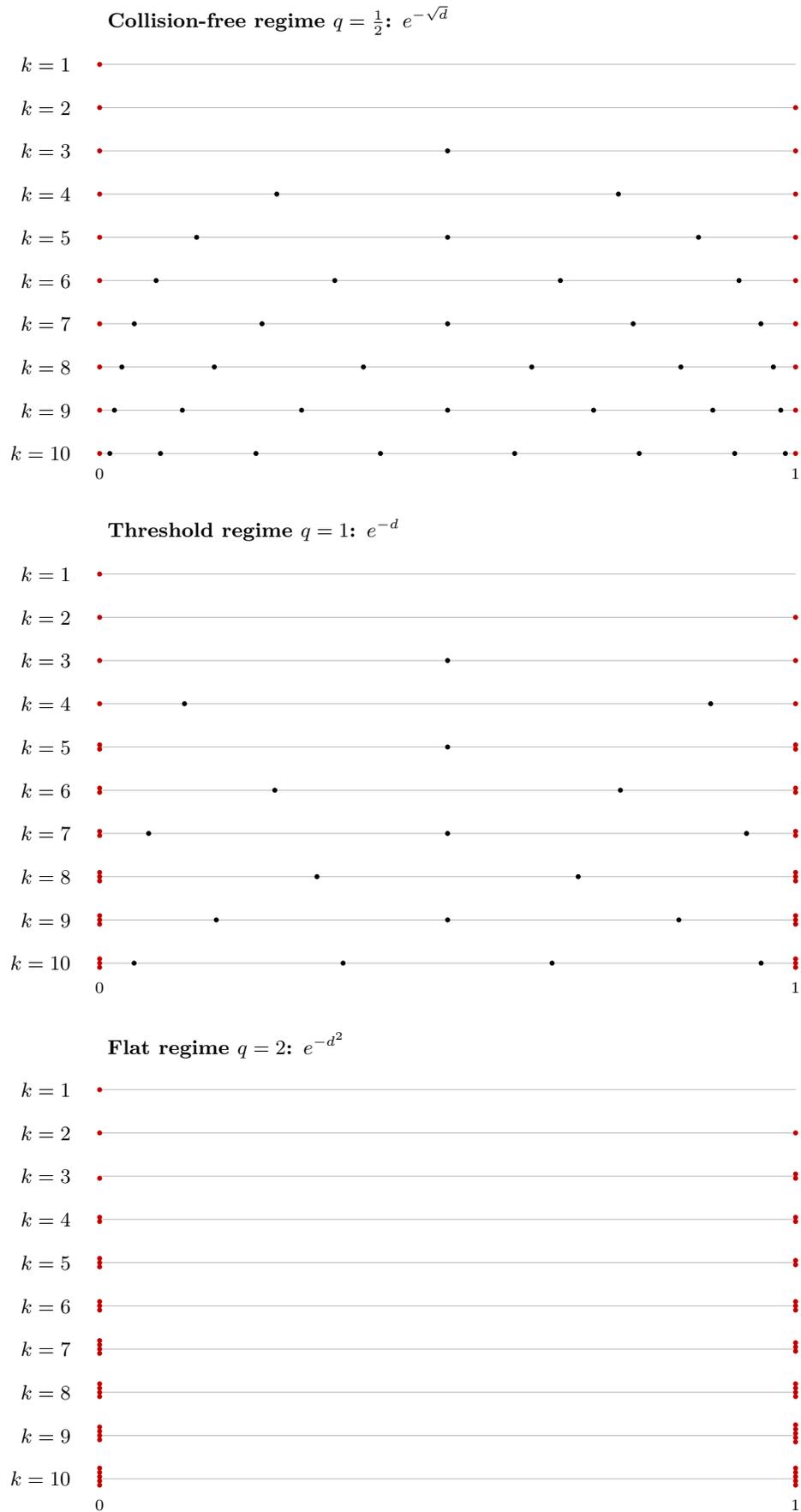

\centering
% [inline block 0: 4 envs, 440696 chars in 2 pieces, piece 1 here, a bare % at each other -> data_tex | \begin{tikzpicture}[x=10.8cm,y=0.55cm] \node[anchor=west,font=\small\bfseries] at (0,1.25) {Collision-free regime $q=\fr...]

\caption{Stacked point diagrams for three representative kernels. Boundary points are drawn in dark red and stacked vertically when coincident.}
\label{fig:kernel-comparison}
\end{figure}
\begin{figure}
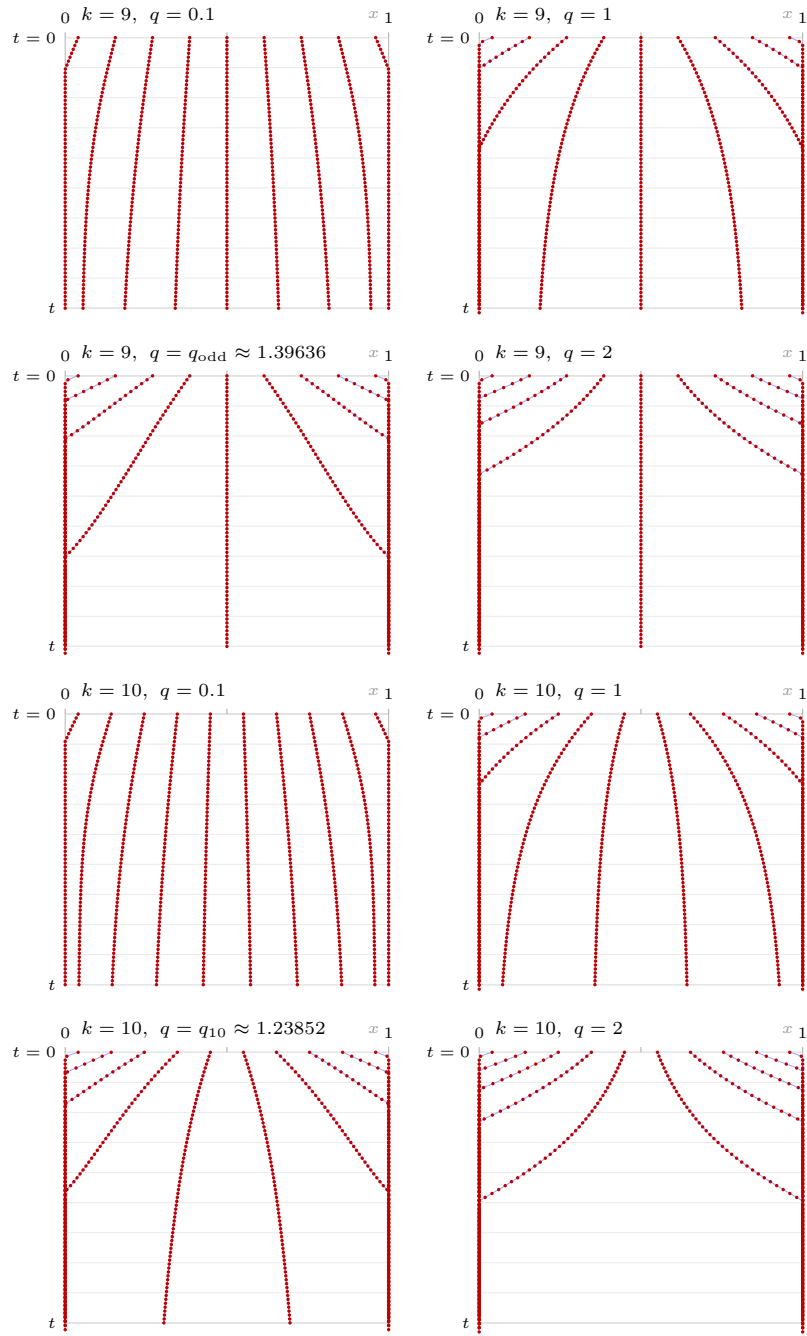

    \centering
    \caption{Gradient flow plots for different combinations of $q,k$.}
    \label{fig:gradflowqk}

%
\end{figure}
\end{document}